\newtheorem{assumption}{Assumption}
\newtheorem{lemma}{Lemma}
\newtheorem{theorem}{Theorem}
\begin{document}

\title{Dynamic Regret for Online Composite Optimization}

\author{Ruijie Hou, Xiuxian Li, {\em Senior Member, IEEE}, and Yang Shi, {\em Fellow, IEEE}
	
\thanks{This research was supported by the National Natural Science Foundation of China under Grant 62003243, and the Shanghai Municipal Science and Technology Major Project, No. 2021SHZDZX0100.}
\thanks{R. Hou and X. Li are with Department of Control Science and Engineering, College of Electronics and Information Engineering, Tongji University, Shanghai 201800, China (e-mail: hourj21@tongji.edu.cn, xxli@ieee.org).}
\thanks{X. Li is also with the Shanghai Research Institute for Intelligent Autonomous Systems, Shanghai 201210, China.}
\thanks{Y. Shi is with Department of Mechanical Engineering, University of Victoria,
	Victoria, B.C. V8W 3P6, Canada (e-mail: yshi@uvic.ca).}
}

\markboth{January~2023}%
{Shell \MakeLowercase{\textit{et al.}}: A Sample Article Using IEEEtran.cls for IEEE Journals}


\maketitle

\begin{abstract}
This paper investigates online composite optimization in dynamic environments, where each objective or loss function contains a time-varying nondifferentiable regularizer. To resolve it, an online proximal gradient algorithm is studied for two distinct scenarios, including convex and strongly convex objectives without the smooth condition. In both scenarios, unlike most of works, an extended version of the conventional path variation is employed to bound the considered performance metric, i.e., dynamic regret. In the convex scenario, a bound $\mathcal{O}(\sqrt{T^{1-\beta}D_\beta(T)+T})$ is obtained which is comparable to the best-known result, where $D_\beta(T)$ is the extended path variation with $\beta\in[0,1)$ and $T$ being the total number of rounds. In strongly convex case, a bound $\mathcal{O}(\log T(1+T^{-\beta}D_\beta(T)))$ on the dynamic regret is established. In the end, numerical examples are presented to support the theoretical findings.
\end{abstract}

\begin{IEEEkeywords}
Online optimization, composite optimization, convex optimization, dynamic regret, dynamic environments.
\end{IEEEkeywords}

\section{Introduction}\label{s1}
Composite optimization has been studied in considerable research in recent decades. 
Usually, the problems are composed of functions with different properties.
This structure can be applied in statistics, machine learning, and engineering, such as empirical risk minimization, logistic regression, compressed sensing and so on\cite{9541509}.
There are many algorithms proposed for solving composite optimization, such as Second Order Primal-Dual Algorithm \cite{9547812}, ADA-FTRL and
ADA-MD\cite{JOULANI2020108}, COMID\cite{inproceedings}, ODCMD\cite{9070199}, SAGE\cite{Hu2009AcceleratedGM}, AC-SA\cite{Ghadimi2012OptimalSA}, RDA\cite{Chen2012OptimalRD}, most of which use the well-known proximal operator.
Proximal operator introduced by \cite{article3} is an efficient method to minimize functions with a convex but possibly nondifferentiable part. 

This paper focuses on online composite optimization in dynamic environments. 
It is one of numerous important settings in online optimization.
Among them, online convex optimization has received immense attentions among scientists with many applications in dynamic setting, such as, online trajectory optimization\cite{9165951}, network resource allocation\cite{allocation}, radio resource management\cite{9500493}, and so on. The typical setup of online optimization can be described below. At each time $t$, a learner selects an appropriate action $x_t \in \mathcal{X}$, where $\mathcal{X} \subset \mathbb{R}^n$ is a feasible constraint set and then an adversary selects a corresponding convex loss $F_t \in \mathcal {F}$, where $\mathcal {F}$ is a given set of loss functions available to the adversary. 
The objective of online optimization is selecting proper actions $\{x_t\}_{t=1}^T$ such that $\sum_{t=1}^TF_t(x_t)$ is minimized to track the best decision in hindsight. 

\textit{Static regret} is a classical performance metric: $\bm{Reg}_T^s:=\sum_{t=1}^TF_t(x_t)-\sum_{t=1}^TF_t(x^*)$, where $x^*$ minimizes the sum of all loss functions over the constraint set\cite{6698348}.
An effective algorithm ensures the average static regret goes to zero over an infinite time horizon\cite{7390084}. 
For instance, Online Gradient Method (OGM) is proved to have bounds $O(\sqrt{T})$ and $O(\log T)$ when $F_t$ is convex and strongly convex, respectively\cite{bubeck2011introduction}. 
By using Online Proximal Gradient (OPG) algorithm for a composite function, where one term is convex or strongly convex, the same bounds are also obtained\cite{9178808,Hu2009AcceleratedGM}. Although both of them share the same static regret bound, OPG can solve the composite loss function with nondifferentiable part, such as objective functions regularized by $\cal{l_1}$ norm. 
Besides, there are other related works solving online composite problems based on static regret.
Authors in \cite{9070199} developed an online distributed composite mirror descent using the Bregman divergence, and attained an average static regret bound of order $\mathcal{O}(\frac{1}{\sqrt{T}})$ for each agent, which they called average regularized regret. Even for multipoint bandit feedback model, they also achieved the same result.

In a wide range of applications, $F_t(x)$ often has different minimizers at each $t$. As such, the \textit{dynamic regret} introduced in \cite{Zinkevich2003} is a well-known extension of static regret.
Many previous works \cite{9184135,9525208} have pointed out that the dynamic regret is hard to achieve sublinearity with respect to $T$, since the function sequence fluctuates arbitrarily.
In order to bound the dynamic regret, one needs to specify some complexity measures.
By introducing $\{u_t\}_{t=1}^T$ as a sequence of any feasible comparators, several common complexity measures are showed as follows:
\subsubsection{The functions variation}
$$V(T):=\sum_{t=2}^Tsup_{u \in \mathcal {X}}
|F_t(u)-F_{t-1}(u)|.$$
$V(T)$ is used to solve problems where cost functions may change along a non-stationary variant of a sequential stochastic optimization. 
It has been obtained minimax regret bounds  $\mathcal{O}(T^{\frac{2}{3}}V(T)^{\frac{1}{3}})$ and $\mathcal{O}(T^{\frac{1}{2}}V(T)^{\frac{1}{2}})$ for convex and strongly convex functions, respectively, in a non-stationary setting under noisy gradient access\cite{2015Besbes}.

\subsubsection{The gradients variation}
$$F(T):=\sum_{t=2}^T\lVert \nabla F_t(u_t)-M_t\lVert_2^2,$$
where $\{M_1,M_2,...,M_T\}$ is computed accroding to the past observations or side information. 
For instance, one of the choice could be $M_t=\nabla F_{t-1}(u_{t-1})$\cite{9325943}.
This work about online Frank-Wolfe algorithm established $\mathcal{O}(\sqrt{ T}(1+V(T)+\sqrt{F(T)}))$ dynamic regret for convex and smooth loss function in non-stationary settings.
\subsubsection{The path variation}
\begin{align*}
	D(T):=\sum_{t=2}^T\Arrowvert u_t-u_{t-1}\Arrowvert,\\
	S(T):=\sum_{t=2}^T\Arrowvert u_t-u_{t-1}\Arrowvert^2.
\end{align*}
If the reference points move slowly, $S(T)$ may be smaller than $D(T)$. On the contrary, $D(T)$ may be smaller than $S(T)$. Specially, $D(T)$ and $S(T)$ are also utilized to represent those variations in the optimal points $x_t^*:=arg\min_{x\in \mathcal{X}}f_t(x)$\cite{2020arXiv200605876Z,10.5555/3294771.3294841}. Related works on path variation are significantly more than the first two.
Earlier research on OGM showed that the dynamic regret has an upper bound $\mathcal{O}(\sqrt{T}(D(T)+1))$ for convex cost functions\cite{Zinkevich2003}.
Then, authors improved that there exists a lower bound $\mathcal{O}(\sqrt{T(D(T)+1)})$\cite{Zhang2018AdaptiveOL}. 
By adding strict conditions, a regret bound of order $\mathcal{O}(D(T)+1)$ was established for strongly and smooth convex loss functions\cite{2016Mokhtari}.
Another study on Online Multiple Gradient Descent for smooth and strongly convex functions improved the regret bound to $\mathcal{O}(min(D(T), S(T)))$\cite{10.5555/3294771.3294841}.

\subsubsection{An extended path variation}
\begin{align}
	 D_\beta(T):=\sum_{t=2}^Tt^\beta\Arrowvert u_t-u_{t-1}\Arrowvert, \label{var}
\end{align}
where $0\leqslant\beta< 1$\cite{2021pgmDynamicRegret}.
Larger weights are allocated for the future parts than the previous parts. When $\beta=0$, $D_0(T)$ is equal to $D(T)$ and when $0<\beta<1$, $D_0(T)<D_\beta(T)<T^\beta D_0(T)$. 
Therefore, $D_\beta(T)$ is an extension of $D(T)$.
In the literature, online optimization based on $D_\beta(T)$ is few. 
One related work using $D_\beta(T)$ is on proximal online gradient algorithm\cite{2021pgmDynamicRegret}. 
The authors established a dynamic regret of order $\mathcal{O}(\sqrt{T^{1-\beta}D_\beta(T)}+\sqrt{T})$ for convex loss functions.

\begin{table*}
	\centering
	\caption{Summary of results on dynamic regret ($F_t=f_t+r_t$)}
	\begin{tabular}{cccc}
		\toprule
		\textbf{Reference}& \textbf{Algorithm} & \textbf{Assumptions on Loss functions}& \textbf{Dynamic regret}\\
		\midrule
		\cite{Zhang2018AdaptiveOL}& OGD& $F_t$ : Convex without $r_t$& $\mathcal{O}(\sqrt{ T(D_0(T)+1)})$\\
		\cite{2016Mokhtari}&OGD& $F_t$ : Strongly convex and smooth without $r_t$&$\mathcal{O}(D_0(T)+1)$\\
		\cite{2021pgmDynamicRegret}&POG&$f_t$ : Convex with fixed regularizer&$\mathcal{O}(\sqrt{T^{1-\beta}D_\beta(T)}+\sqrt{T})$\\
		\cite{Dixit2021OnlineLO}&DP-OGD&$f_t$ : Strongly convex and smooth &$\mathcal{O}(\log T(1+D_0(T)))$\\
		\cite{2020Ajalloeian}&OIPG&$f_t$ : Strongly convex and smooth&$\mathcal{O}(1+D_0(T))$\\
		This work&OPG&
		$f_t$ : Convex&$\mathcal{O}(\sqrt{T^{1-\beta}D_\beta(T)+T})$\\
		This work&OPG&$f_t$ : Strongly convex&$\mathcal{O}(\log T(1+T^{-\beta}D_\beta(T)))$\\
		\bottomrule
	\end{tabular}
\end{table*}

In addition, there are plenty of works studying about online composite optimization. 
In \cite{2020Ajalloeian}, inexact online proximal gradient algorithm was proposed for loss functions composed of strongly convex, smooth term and convex term, where the loss relies on the 
estimated gradient information of the smooth convex
function and the approximate proximal operator. 
Coincidentally, for allowing the loss functions to be subsampled, inexact gradient was added into online proximal gradient algorithm for a finite sum of composite functions\cite{RN74}.
When both algorithms are applied to the exact gradient case, the upper bounds are both in order of $\mathcal{O}(1+D(T))$. 
Furthermore, considering inexact scenarios by approximate gradients, approximate proximal operator and communication noise\cite{9654953}, online inexact distributed proximal gradient method (DPGM) guaranteed Q-linearly convergence to the optimal solution with some error. 
Decentralized algorithms are also proposed for solving composite optimization, as the composite loss function of each agent may be held privately. 
An unconstrained problem is considered firstly.
An online DPGM was proposed in \cite{Dixit2021OnlineLO} for each private cost function composed of strongly convex, smooth function and convex nonsmooth regularizer, obtaining a regret bound of order $\mathcal{O}(\log T(1+D(T)))$. 
Later, some works use mirror descent algorithm for constrained problems.
A distributed online primal-dual mirror descent algorithm is developed to handle composite objective functions, which consist of local convex functions and regularizers with time-varying coupled inequality constraints\cite{8950383}.

In this paper, we revisit OPG algorithm for composite functions with a time-varying regularizer and discuss dynamic regret in terms of $D_\beta(T)$. Most related works about OPG consider a time-invariant $r(x)$\cite{2021pgmDynamicRegret,10.5555/1577069.1755882,9287537,article7,9287550}, while our problem extends the stationary regularizer to time-varying scenario.

The contributions of this paper include:
(a) solving online composite optimization problems with two time-varying nonsmooth parts in terms of $D_\beta(T)$;
(b)	obtaining a best-known dynamic regret bound for composite functions with two convex but nondifferentiable functions;
(c) establishing a dynamic regret bound of OPG for functions composed of a strongly convex term and a convex but nondifferentiable term.

The paper is organized as follows. Section \ref{s2} describes the online composite optimization problem. 
In Section \ref{s3}, OPG algorithm is introduced, necessary assumptions are presented and main results are provided for two cases.
Numerical examples of the OPG algorithm are given in Section \ref{s4}. The conclusions are presented in Section \ref{s5}.

\textit{Notations:} Let $\lVert \cdot\rVert_1$ represent the $\cal{l}_1$ norm and $\lVert \cdot\rVert$ represent the $\cal{l}_2$ norm by default. 
$\partial f(x)$ denotes the subdifferential of a
convex function $f$ at $x$ and
$\tilde{\nabla} f(x)$ represents one subgradient in $\partial f(x)$.
Let $[N]:=\{1,2,...,N\}$.
Let $\langle \cdot, \cdot \rangle$ be the Euclidean inner product on $\mathbb{R}^n$. 

\section{Problem Formulation}\label{s2}
The loss function at each time $t\geq 0$ is composed of two parts as follows:
\begin{align}
	F_t(x)&:=f_{t}(x)+r_{t}(x),  ~~s.t.~x\in \cal {X}            \label{1}
\end{align}
where $\mathcal{X}\subset\mathbb{R}^n$ is a feasible constraint set, and functions $f_t,r_t:\mathcal{X}\to\mathbb{R}$ are not necessarily differentiable. 
At each instant $t$, an action $x_t \in \mathcal{X}$ is selected by the learner and then loss functions $f_{t }(x_t),r_{t }(x_t): \mathcal{X}\to \mathbb{R}$ are chosen by an adversary and revealed to the learner. The proper action sequence $\{x_t\}_{t=1}^T$ is expected to be obtained such that the dynamic regret is sublinear with the horizon $T$, i.e., $\lim_{T\to\infty}\frac{Reg_T^d}{T}=0$, where
\begin{align}
	\bm{Reg}_T^d:=\sum_{t=1}^TF_t(x_t)-\sum_{t=1}^TF_t(u_t),\label{2}
\end{align}
and $\{u_t\}_{t=1}^T$ is a sequence of any feasible comparators. It is worth noting that an important case of dynamic regret is when $u_t=x_t^*:=argmin_{x\in \mathcal{X}}F_t(x)$.
 

Examples of applications using the time-varying nonsmooth $f_t$ and the time-varying $r_t$ are briefly presented below.

\subsubsection{Examples of time-varying nonsmooth $f_t(x)$}
In machine learning, signal processing and statistics, many nonsmooth error functions have a wide range of applications for solving regression and classification problems\cite{Yang2014AnEP}.
In online streaming data-generating process, the probability density function may change with time\cite{7296710}.
There are some examples for online nonsmooth convex loss functions, by setting $y_t$ as label and $a_t$ as feature vector.
\begin{itemize}
	\item Hinge loss: 
	$$f_t(x)=max(0,1-y_ta_t^\top x).$$
	\item Generalized hinge loss:
	\begin{equation*}
		f_t(x)=\left\{
		\begin{aligned}
			&1-\alpha y_ta_t^\top x, ~~ &if~ y_ta_t^\top x\leqslant0 \\
			&1-y_ta_t^\top x, ~~ &if~ 0<y_ta_t^\top x<1 \\
			&0, ~~ &if~ y_ta_t^\top x\geqslant 1,
		\end{aligned}
		\right.
	\end{equation*}
	where $\alpha>1$.
	\item Absolute loss:
	$$
	f_t(x)=max_{\alpha\in[-1,1]}\alpha(y_t-a_t^\top x).
	$$
	\item $\epsilon$-insensitive loss:
	$$
	f_t(x)=max(|y_t-a_t^\top x|-\epsilon,0),
	$$
	where $\epsilon$ is a small postive constant.
\end{itemize}

\subsubsection{Examples of time-varying nondifferentiable $r_t(x)$}

\begin{itemize}
	\item The weighted $\cal{l}_1$ norm function:\\
	 Assume the optimal solution is sparse with many zero or near-zero coefficients. To solve it, let $r_t(x)$ be the weighted $\cal{l}_1$ norm function\cite{5495870,5947303,Yamamoto2012AdaptivePF}:
	\begin{align}\label{eq31}
		r_t(x):=\rho \sum_{i=1}^{N}\omega_i^{t}|x_i|,
	\end{align}
	where $x_i$ is the $i$-th component of $N$-dimensional $x$ and $\rho > 0$ is a parameter for regularizers. The weight $\omega_i^{t}$
	$(i = 1, 2,...,N)$ is defined by
	\begin{equation*}
		\omega_i^{t}:=\left\{
		\begin{aligned}
			\epsilon, &~~ if~ |x_i^{t-1}|>\tau \\
			1, &~~ otherwise,
		\end{aligned}
		\right.
	\end{equation*}
	with a parameter $\tau > 0$ and a small $\epsilon>0$, where $x_i^{t-1}$ is $t-1$-th iteration of $x_i$ by any proposed algorithm.
	The parameter $\tau$ may rely on noise statistics, e.g., the variance.
	By introducing the weights $\omega_i^{t}$, larger $|x_i|$ has smaller coefficients.
	Hence, larger $|x_i|$ renders a less sensitivity to the 
	impact of the proximity operator.
	Therefore, the weighted $\cal{l}_1$ norm exploits the sparseness more efficiently.
	\item The indicator function of time-varying set constraints:\\
	In a network flow problem, to handle the constraints $x \in \cal{X}_t$ with $\cal{X}_t \subset \cal{X}$ being a convex set, the authors in \cite{2020Ajalloeian} use indicator functions based on a given network $(\cal{N},\mathcal{E})$.
	The relevant variables are defined as follows.
	$z(i,s)$ denotes the rate produced at node $i$ for traffic $s$ and $x(ij,s)$ is the flow between nodes $i$ and $j$ for traffic $s$. For brevity, let $z$ and $x$ be the stacked traffic and link rates, respectively.
	Time-varying nonsmooth $r_t(z,x)$ is composed of two functions $r(z,x)$ and $\delta_{\cal{X}_t}(z,x)$, where $r(z,x)=\frac{\nu}{2}(\lVert z\lVert^2+\lVert x\lVert^2)$ is convex with some $\nu>0$ and $\delta_{\cal{X}_t}(z,x)$ is an indicator function of constraints, defined as
	\begin{equation*}
			\delta_{\cal{X}_t}(z,x):=\left\{
			\begin{aligned}
					+\infty , &~~ x\notin \cal{X}_t \\
					0, &~~ x\in \cal{X}_t.
				\end{aligned}
			\right.
		\end{equation*}
	Here are the relevant constraints in the netwrok. The capacity constraint for link is $0\leqslant \sum_{s}x(ij,s)+w_t(ij)\leqslant c_t(ij)$, where $c_t(ij)$ is the time-varying link capacity and $w_t(ij)$ is a time-varying and non-controllable link traffic. The flow conservation constraint implies $z(s) = \bar{R} x(s)$ with the routing matrix $\bar{R}$. $z^{max}$ represents the maximal traffic rate.
	Therefore, the time-varying nonsmooth $r_t(z,x)$ can be written as 
	\begin{align*}
		&r_t(z,x)=\sum_{i,s}\delta_{\{0\leqslant \sum_{s}x(ij,s)+w_t(ij)\leqslant c_t(ij)\}}\\
		&+\delta_{\{0\leqslant z\leqslant z^{max}\}}+\sum_{s}\delta_{\{z(s)=\bar{R}x(s)\}}+\frac{\nu}{2}(\lVert z\lVert^2+\lVert x\lVert^2).
	\end{align*}
\end{itemize}

\section{Main Results}\label{s3}
This section will introduce the OPG alogrithm and present the main results.

Firstly, how the OPG Algorithm updates the iterate $x_{t}$ is introduced. At each iteration $t$, based on a subgradient of $f_t(x)$ and a positive step size $\eta_t$, the next iteration $x_{t+1}$ is computed as
\begin{align}
	x_{t+1}=prox_{\eta_tr_t}(x_t-\eta_t\tilde{\nabla} f_t(x_t)),\label{algo}
\end{align}
where the proximal operator is defined by
\begin{equation}\label{pm}
	prox_{\eta_t r_t}(x):=arg\min_{u\in \cal{X}} \{r_t(u)+\frac{1}{2\eta_t}\lVert u-x\rVert^2\}.
\end{equation}
Then based on (\ref{algo}) and (\ref{pm}), the update is equivalent to
\begin{align}\label{x_t+1}
	x_{t+1 }=arg \min_{x\in \mathcal{X}}r_{t }(x)+\langle x, \tilde{\nabla} f_{t }(x_{t })\rangle+\frac{1}{2\eta_t}\rVert x-x_{t }\rVert^2.
\end{align}
At each iteration $t$, the algorithm assumes the availability of $\tilde{\nabla} f_{t }(x_{t })$.

\begin{algorithm}\
	\renewcommand{\algorithmicrequire}{$\textbf{Input:}$}
	\renewcommand{\algorithmicensure}{\textbf{Output:}}
	\caption{Online Proximal Gradient (OPG)}
	\label{alg1}
	\begin{algorithmic}[1]
		\REQUIRE Initial vector $x_1$, learning rate $\eta_t$ with $1\leqslant t\leqslant T$.
		\ENSURE $x_{2},x_{3},...,x_{T}$
		
		\FORALL{$t=1,2,...,T$}
	\STATE Compute the next action:$$	x_{t+1 }=arg \min_{x\in \mathcal{X}}\{r_{t }(x)+\langle x, \tilde{\nabla} f_{t }(x_{t })\rangle+\frac{1}{2\eta_t}\rVert x-x_{t }\rVert^2\}.$$
	\STATE \textbf{return} $x_{t+1}$
	\ENDFOR
\end{algorithmic}
\end{algorithm}

To proceed, the following standard assumptions are useful in the analysis.
\begin{assumption}\label{a1}
	Functions $f_{t }(x),r_{t }(x):\mathcal{X} \to \mathbb{R}$ are closed, convex for all $t\in [T]$.
\end{assumption}
Assumption \ref{a1} ensures the existence of optimal point for $f_{t}(x),r_{t}(x)$. Besides, according to convexity, for any $x,y\in \cal{X}$, any subgradient $\tilde{\nabla} f_{t}(x)\in \partial f_{t}(x)$ and $\tilde{\nabla} r_{t}(x)\in \partial r_{t}(x)$, one has:
\begin{align*}
	f_{t}(y)&\geqslant f_{t}(x)+\langle \tilde{\nabla} f_{t}(x), y-x\rangle,~ \forall x,y \in \mathcal{X}, \notag\\
	r_{t}(y)&\geqslant r_{t}(x)+\langle \tilde{\nabla} r_{t}(x), y-x\rangle,~ \forall x,y \in \mathcal{X}.
\end{align*}
\begin{assumption}\label{a2}
	Functions $f_{t }: \mathcal{X}\to \mathbb{R}$ is $\mu$-strongly convex for all $t\in [T]$. 
\end{assumption}
According to $\mu$-strongly convexity, for any 
$x,y \in \mathcal{X}$ and any subgradient $\tilde{\nabla} f_{t}(x)\in \partial f_{t}(x)$, one has:
$$f_{t}(y)\geqslant f_{t}(x)+\langle \tilde{\nabla} f_{t}(x), y-x\rangle + \frac{\mu}{2}\rVert x-y\rVert^2.$$
\begin{assumption}\label{a4}
	The set $\mathcal{X}$ is convex and compact, and $\rVert x-y\rVert\leq R$ for $R>0$ and all x,y $\in \mathcal {X}$.
\end{assumption}

\begin{assumption}\label{a5}
	For any $x\in\cal{X}$, $\rVert\tilde{\nabla} F_{t }(x)\rVert\leq M$ for some $M>0$.
\end{assumption}
If $\rVert\tilde{\nabla}f_{t}(x)\rVert$ and $\rVert\tilde{\nabla}r_{t}(x)\rVert$ respectively have upper bounds $m_f$ and $m_r$, one could set $M=m_f+m_r$.
\subsection{Dynamic Regret Bound for Convex $f_{t }$}\label{s3.1}
 In this case, the main result is provided in the following theorem.

\begin{theorem}\label{t0}
	Let Assumptions \ref{a1},\ref{a4},\ref{a5} hold. By setting non-increasing step size $\eta_{t}=t^{-\gamma}\sigma$ in the OPG, where $\sigma$ is a constant value
	$$\sigma=\frac{\sqrt{(1-\gamma)2RT^{2\gamma-\beta-1}D_\beta(T)+R^2T^{2\gamma-1}}}{M},$$
	and $\gamma\in [\beta,1)$, there holds
	\begin{align}\label{eq20}
		\bm{Reg}_T^d=\mathcal{O}(\sqrt{T^{1-\beta}D_\beta(T)+T}),
	\end{align}
where $D_\beta(T)$ is defined in (\ref{var}).
\end{theorem}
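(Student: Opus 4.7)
The plan is to start from the first-order optimality characterization of the proximal update~\eqref{x_t+1}. Since its objective is $\eta_t^{-1}$-strongly convex in $x$, the standard three-point inequality gives, for every $u\in\mathcal{X}$,
\begin{align*}
r_t(x_{t+1}) + \langle \tilde{\nabla} f_t(x_t), x_{t+1}-u\rangle \le r_t(u) + \tfrac{1}{2\eta_t}\bigl(\|u-x_t\|^2 - \|u-x_{t+1}\|^2 - \|x_{t+1}-x_t\|^2\bigr).
\end{align*}
I would set $u=u_t$, add $f_t(x_t)-f_t(u_t)\le\langle\tilde{\nabla}f_t(x_t),x_t-u_t\rangle$ from Assumption~\ref{a1}, and bridge $r_t(x_t)$ and $r_t(x_{t+1})$ using the bounded-subgradient property implicit in Assumption~\ref{a5} ($r_t(x_t)-r_t(x_{t+1})\le m_r\|x_t-x_{t+1}\|$). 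After Cauchy--Schwarz on the cross term $\langle\tilde{\nabla}f_t(x_t),x_t-x_{t+1}\rangle$ and Young's inequality $M\|x_{t+1}-x_t\|\le\tfrac{\eta_t M^2}{2}+\tfrac{1}{2\eta_t}\|x_{t+1}-x_t\|^2$, the $-\tfrac{1}{2\eta_t}\|x_{t+1}-x_t\|^2$ contribution cancels and the per-round bound collapses to
\begin{align*}
F_t(x_t)-F_t(u_t)\le\tfrac{M^2\eta_t}{2}+\tfrac{1}{2\eta_t}\bigl(\|u_t-x_t\|^2-\|u_t-x_{t+1}\|^2\bigr).
\end{align*}

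Summing over $t$, I would handle the two pieces separately. The first gives $\tfrac{M^2\sigma}{2}\sum_{t=1}^T t^{-\gamma}\le \tfrac{M^2\sigma T^{1-\gamma}}{2(1-\gamma)}$ via comparison with the integral. The second piece does not telescope because the comparator drifts, so I would perform a shift and exploit
\begin{align*}
\|u_{t+1}-x_{t+1}\|^2-\|u_t-x_{t+1}\|^2 = \langle u_{t+1}-u_t,\,u_{t+1}+u_t-2x_{t+1}\rangle\le 2R\,\|u_{t+1}-u_t\|
\end{align*}
(Assumption~\ref{a4}), together with the monotonicity of $\eta_t^{-1}$, to obtain
\begin{align*}
\sum_{t=1}^T\frac{\|u_t-x_t\|^2-\|u_t-x_{t+1}\|^2}{2\eta_t}\le\frac{R^2}{2\eta_T}+R\sum_{t=2}^T\frac{\|u_t-u_{t-1}\|}{\eta_t}.
\end{align*}

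Substituting $\eta_t=t^{-\gamma}\sigma$ and using $t^\gamma\le T^{\gamma-\beta}t^\beta$ (which is precisely where $\gamma\ge\beta$ enters) converts the last sum into $\tfrac{R T^{\gamma-\beta}}{\sigma}D_\beta(T)$. Collecting everything, the regret is dominated by an expression of the form $A\sigma+B/\sigma$ with $A=\tfrac{M^2T^{1-\gamma}}{2(1-\gamma)}$ and $B=\tfrac{R^2T^\gamma}{2}+RT^{\gamma-\beta}D_\beta(T)$. Minimising over $\sigma$ yields $\sigma=\sqrt{B/A}$, which algebraically reproduces the constant prescribed in the statement, and the minimised value $2\sqrt{AB}=\tfrac{M}{\sqrt{1-\gamma}}\sqrt{R^2T+2RT^{1-\beta}D_\beta(T)}$ is exactly $\mathcal{O}\bigl(\sqrt{T^{1-\beta}D_\beta(T)+T}\bigr)$.

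The hardest step will be the ``telescoping with a drifting comparator'': one must simultaneously (i) accommodate the increasing $\eta_t^{-1}$ by absorbing the boundary term $\tfrac{R^2}{2}(\eta_T^{-1}-\eta_1^{-1})$ into $\tfrac{R^2}{2\eta_T}$, and (ii) re-weight the resulting sum $\sum_t\eta_t^{-1}\|u_t-u_{t-1}\|\propto\sum_t t^\gamma\|u_t-u_{t-1}\|$ so that it matches the definition of $D_\beta(T)$. This double requirement is exactly what forces $\gamma\in[\beta,1)$: $\gamma<1$ keeps the step-size sum sublinear, while $\gamma\ge\beta$ lets us dominate $t^\gamma$ by $T^{\gamma-\beta}t^\beta$. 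Once this is done, the remaining optimisation in $\sigma$ is a textbook AM--GM argument.
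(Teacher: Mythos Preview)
Your proposal is correct and mirrors the paper's proof almost step for step: the per-round bound you derive is exactly Lemma~\ref{l1} with $\mu=0$, the drifting-comparator telescoping is the content of inequality~\eqref{e10} (quoted from~\cite{2021pgmDynamicRegret}), the conversion $t^{\gamma}\le T^{\gamma-\beta}t^{\beta}$ is how the paper passes to $D_\beta(T)$, and the optimisation over $\sigma$ is identical. The only cosmetic difference is in bridging $r_t(x_t)-r_t(x_{t+1})$: the paper uses convexity to write this as $\langle\tilde{\nabla}r_t(x_t),x_t-x_{t+1}\rangle$ and then applies Young's inequality to the combined vector $\tilde{\nabla}f_t(x_t)+\tilde{\nabla}r_t(x_t)$, whose norm is bounded by $M$ directly via Assumption~\ref{a5}; your route through a separate constant $m_r$ assumes slightly more than Assumption~\ref{a5} literally gives, but swapping in the convexity inequality fixes this with no change to the rest of the argument.
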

\begin{proof}
		See Appendix \ref{ap0}.
\end{proof}

\textit{Remark 1:}
Note that the bound (\ref{eq20}) is comparable to the best-known one $\sqrt{T^{1-\beta}D_\beta(T)}+\sqrt{T}$ established in \cite{2021pgmDynamicRegret}, where time-invariant regularizers are considered. However, more general time-varying regularizers are investigated here.

\textit{Remark 2:}
Note that the bound $\mathcal{O}(\sqrt{ T(D_0(T)+1)})$ in \cite{Zhang2018AdaptiveOL} is an optimal dynamic regret matching with the lower bound given in
\cite[Theorem 2]{Zhang2018AdaptiveOL}. 
By setting $\beta=0$, this bound is a special case of (\ref{eq20}).
%

\subsection{Dynamic Regret Bound for Strongly Convex $f_{t}$}\label{s3.2}
The following Lemma \ref{l1} gives an upper bound for any point in $\cal{X}$ and positive step size. 
Besides, when setting $\mu=0$, it also holds for the case without strongly convexity, which is discussed in Section \ref{s3.1}.
It will be useful for deriving the main results.
\begin{lemma}\label{l1}
	Let Assumptions \ref{a1},\ref{a2},\ref{a4},\ref{a5} hold. For any $u_t \in \cal{X}$, positive step size $\eta_t>0$ and iteration $x_t$ in the OPG, there holds
	\begin{align}
		F_{t}(x_{t})-F_{t}(u_{t}) &\leqslant\frac{1}{2}(\frac{1}{\eta_t}-\mu)\rVert u_{t}-x_t\rVert^2-\frac{1}{2\eta_t}\rVert u_{t}-x_{t+1}\rVert^2\notag\\
		&+\frac{\eta_t}{2}
		(\rVert \tilde{\nabla} f_{t }(x_t)+\tilde{\nabla} r_{t }(x_t)\rVert^2).\label{eq5}
	\end{align}
\end{lemma}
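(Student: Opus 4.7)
The plan is to combine the first-order optimality of the OPG subproblem with (strong) convexity, and then tidy up via Young's inequality and the three-point identity $\langle a-b,b-c\rangle=\tfrac12(\|a-c\|^2-\|b-c\|^2-\|a-b\|^2)$.

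First, I would write out the first-order optimality condition for the subproblem in \eqref{x_t+1}. Because $x_{t+1}$ is the minimizer over $\mathcal{X}$, there exists $\tilde{\nabla} r_t(x_{t+1})\in\partial r_t(x_{t+1})$ such that, for every $u_t\in\mathcal{X}$,
\begin{equation*}
\Bigl\langle \tilde{\nabla} f_t(x_t)+\tilde{\nabla} r_t(x_{t+1})+\tfrac{1}{\eta_t}(x_{t+1}-x_t),\, u_t-x_{t+1}\Bigr\rangle \geqslant 0.
\end{equation*}
This is the workhorse inequality for the argument.

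Next I would split the target quantity as
\begin{equation*}
F_t(x_t)-F_t(u_t) = \bigl[f_t(x_t)-f_t(u_t)\bigr]+\bigl[r_t(x_t)-r_t(x_{t+1})\bigr]+\bigl[r_t(x_{t+1})-r_t(u_t)\bigr],
\end{equation*}
and bound each bracket by (strong) convexity from Assumptions \ref{a1}--\ref{a2}: $\mu$-strong convexity of $f_t$ between $x_t$ and $u_t$ gives the $-\tfrac{\mu}{2}\|u_t-x_t\|^2$ term together with $\langle\tilde{\nabla} f_t(x_t),x_t-u_t\rangle$; convexity of $r_t$ between $x_t$ and $x_{t+1}$ produces $\langle\tilde{\nabla} r_t(x_t),x_t-x_{t+1}\rangle$ (this is what ultimately delivers $\tilde{\nabla} r_t(x_t)$ in the final bound, even though optimality only provided $\tilde{\nabla} r_t(x_{t+1})$); convexity of $r_t$ between $x_{t+1}$ and $u_t$ produces $\langle\tilde{\nabla} r_t(x_{t+1}),x_{t+1}-u_t\rangle$. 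Summing, the $\tilde{\nabla} r_t(x_{t+1})$ term from convexity cancels against the same subgradient in the optimality inequality, leaving
\begin{equation*}
F_t(x_t)-F_t(u_t) \leqslant \bigl\langle \tilde{\nabla} f_t(x_t)+\tilde{\nabla} r_t(x_t),\, x_t-x_{t+1}\bigr\rangle + \tfrac{1}{\eta_t}\langle x_t-x_{t+1},\, x_{t+1}-u_t\rangle-\tfrac{\mu}{2}\|u_t-x_t\|^2.
\end{equation*}

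Finally, I would finish by decoupling. For the first inner product apply Young's inequality $\langle a,b\rangle\leqslant\tfrac{\eta_t}{2}\|a\|^2+\tfrac{1}{2\eta_t}\|b\|^2$ with $a=\tilde{\nabla} f_t(x_t)+\tilde{\nabla} r_t(x_t)$ and $b=x_t-x_{t+1}$; for the second apply the three-point identity, which produces $\tfrac{1}{2\eta_t}\|x_t-u_t\|^2-\tfrac{1}{2\eta_t}\|x_{t+1}-u_t\|^2-\tfrac{1}{2\eta_t}\|x_t-x_{t+1}\|^2$. The two $\|x_t-x_{t+1}\|^2$ contributions cancel exactly, and collecting the remaining terms yields \eqref{eq5}.

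The only real subtlety is the mismatch of subgradient arguments: the OPG optimality inequality naturally gives a subgradient of $r_t$ evaluated at $x_{t+1}$, while the conclusion wants one at $x_t$. The splitting $r_t(x_t)-r_t(u_t)=[r_t(x_t)-r_t(x_{t+1})]+[r_t(x_{t+1})-r_t(u_t)]$ is the key bookkeeping device that makes this cancellation happen; everything else is routine. Note also that the $\mu$-term appears only from strong convexity of $f_t$, so setting $\mu=0$ recovers the merely convex case used in Section \ref{s3.1}.
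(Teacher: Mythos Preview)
Your proof is correct and follows essentially the same route as the paper: first-order optimality of the proximal subproblem, the same splitting $r_t(x_t)-r_t(u_t)=[r_t(x_t)-r_t(x_{t+1})]+[r_t(x_{t+1})-r_t(u_t)]$ to reconcile the subgradient at $x_{t+1}$ with the desired one at $x_t$, then Young's inequality and the three-point identity. The only cosmetic difference is that the paper phrases the three-point identity via the Bregman divergence of $\psi(x)=\tfrac12\|x\|^2$ (its property~$c$), which for this choice of $\psi$ is exactly your identity $\langle a-b,b-c\rangle=\tfrac12(\|a-c\|^2-\|b-c\|^2-\|a-b\|^2)$.
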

\begin{proof}
	See Appendix \ref{ap1}.
\end{proof}

Now by summing (\ref{eq5}) at each $t$, selecting appropriate step size $\eta_t$, the following theorem is obtained for characterizing the dynamic regret bound for strongly convex $f_t$.

\begin{theorem}\label{t1}
	Let Assumptions \ref{a1},\ref{a2},\ref{a4},\ref{a5} hold. For any  $0\leqslant\beta<1$ in OPG, setting the non-increasing step size $\eta_t=\frac{\gamma}{ t}$ and choosing $\gamma$ as
	\begin{align*}
		\gamma=\frac{2RT^{-\beta} D_\beta(T)+R^2}{\delta R^2+(\mu-\delta)\frac{1}{T}\lVert u_1-x_{1}\rVert^2},
	\end{align*}
where $\delta\in(0,\mu)$ is small enough such that $\gamma\delta<1$, and $u_{1}$ and $x_{1}$ are the initial comparator vector and the initial vector in OPG, respectively, there holds
	\begin{align}
		\bm{Reg}_T^d=\mathcal{O}(\log T(1+T^{-\beta}D_\beta(T))),
	\end{align}
where $D_\beta(T)$ is defined in (\ref{var}).
\end{theorem}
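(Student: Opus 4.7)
The plan is to sum the per-round estimate of Lemma~\ref{l1} with $\eta_t = \gamma/t$ and then exploit the explicit form of $\gamma$ to produce an exact cancellation of the leading $T$-scale terms. Writing $A_t := \|u_t - x_t\|^2$ and using $\|\tilde{\nabla} f_t(x_t) + \tilde{\nabla} r_t(x_t)\| \leq M$ from Assumption~\ref{a5}, inequality (\ref{eq5}) becomes
\begin{equation*}
F_t(x_t) - F_t(u_t) \leq \tfrac{1}{2}\bigl(\tfrac{t}{\gamma} - \mu\bigr) A_t - \tfrac{t}{2\gamma}\|u_t - x_{t+1}\|^2 + \tfrac{\gamma M^2}{2t},
\end{equation*}
whose last piece sums to $\tfrac{\gamma M^2(1+\log T)}{2}$ via $\sum_{t=1}^T 1/t \leq 1 + \log T$. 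To introduce $D_\beta(T)$, I would expand $\|u_t - x_{t+1}\|^2$ about $u_{t+1}$ and invoke Cauchy--Schwarz together with the diameter bound of Assumption~\ref{a4}, yielding $-\|u_t - x_{t+1}\|^2 \leq -\|u_{t+1} - x_{t+1}\|^2 + 2R\|u_{t+1} - u_t\|$. Applying this for $t = 1, \ldots, T-1$ and discarding the non-positive boundary term $-\tfrac{T}{2\gamma}\|u_T - x_{T+1}\|^2$ aligns the $\|u_{t+1} - x_{t+1}\|^2 = A_{t+1}$ entries with the $A_t$ entries, and the telescope then collapses their combined coefficient on each $A_t$ to the single constant $\tfrac{1}{2}(\tfrac{1}{\gamma} - \mu)$.

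This produces a bound of the form
\begin{equation*}
\bm{Reg}_T^d \leq \tfrac{1}{2}\bigl(\tfrac{1}{\gamma} - \mu\bigr) \sum_{t=1}^T A_t + \tfrac{R}{\gamma}\sum_{t=1}^{T-1} t\|u_{t+1}-u_t\| + \tfrac{\gamma M^2(1+\log T)}{2}.
\end{equation*}
The path-variation sum is controlled using the elementary estimate $t \leq T^{1-\beta}(t+1)^\beta$ for $t \leq T-1$ followed by the index shift $s = t+1$ to match definition (\ref{var}), giving $\sum_{t=1}^{T-1} t\|u_{t+1}-u_t\| \leq T^{1-\beta}D_\beta(T)$.

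The decisive move is the splitting $\tfrac{1}{\gamma} - \mu = (\tfrac{1}{\gamma} - \delta) - (\mu - \delta)$. Because $\gamma\delta < 1$, the first piece has a positive coefficient, so I can upper-bound $\sum_t A_t \leq T R^2$ via Assumption~\ref{a4}; the second piece has a negative coefficient, and the crude lower bound $\sum_t A_t \geq A_1$ still yields a valid upper bound on that piece. Adding the two contributions gives
\begin{equation*}
\bm{Reg}_T^d \leq \tfrac{T\bigl(R^2 + 2RT^{-\beta}D_\beta(T)\bigr)}{2\gamma} - \tfrac{T\bigl(\delta R^2 + (\mu-\delta)A_1/T\bigr)}{2} + \tfrac{\gamma M^2(1+\log T)}{2},
\end{equation*}
where $A_1 = \|u_1 - x_1\|^2$. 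The $\gamma$ specified in the theorem is exactly the value that makes the first two terms cancel identically; what survives is $\tfrac{\gamma M^2(1+\log T)}{2}$, and using $A_1 \geq 0$ to bound $\gamma \leq (R^2 + 2RT^{-\beta}D_\beta(T))/(\delta R^2)$ yields the claimed $\mathcal{O}(\log T(1 + T^{-\beta}D_\beta(T)))$ rate.

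The main obstacle I expect is recognizing the need for the $\delta$-splitting: without it, $\sum_t A_t$ appears with a single fixed coefficient $\tfrac{1}{2}(\tfrac{1}{\gamma}-\mu)$, and any brute-force optimization over $\gamma$ only recovers a $\sqrt{T\log T}$-type rate strictly weaker than the claimed logarithmic bound. Once the split is in place, the one remaining check is that $\gamma\delta < 1$ is realizable for sufficiently small $\delta \in (0, \mu)$, which follows from $\gamma\delta \to 0$ as $\delta \to 0^+$ for fixed problem data; the rest is algebraic bookkeeping centered on the telescope and the $t = T$ boundary.
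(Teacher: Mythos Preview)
Your proposal is correct and follows essentially the same route as the paper's proof: both start from Lemma~\ref{l1}, expand $\|u_t-x_{t+1}\|^2$ about $u_{t+1}$ to expose the path-variation increments, observe that with $\eta_t=\gamma/t$ the coefficient on each $\|u_t-x_t\|^2$ collapses to the constant $\tfrac{1}{2}(\tfrac{1}{\gamma}-\mu)$, perform the same $\mu=\delta+(\mu-\delta)$ split so that the positive part can be capped by $R^2$ while only the $t=1$ contribution is retained on the negative side, and then choose $\gamma$ to cancel the $O(T)$ terms. The only cosmetic differences are that the paper keeps the general $\eta_t$ notation through the telescoping (writing the coefficients as $\tfrac{1}{\eta_{t+1}}-\tfrac{1}{\eta_t}-\mu$ before specializing) and bounds the path sum via $\max_{t}\{t^{1-\beta}/\gamma\}$ rather than your inequality $t\le T^{1-\beta}(t+1)^\beta$, but these lead to the identical estimate $RT^{1-\beta}D_\beta(T)/\gamma$.
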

\begin{proof}
	See Appendix \ref{ap2}.
\end{proof}
It is worth noting that $\gamma$ in Theorem \ref{t1} is a fixed value but $\gamma$ in Theorem \ref{t0} is any positive value in $[\beta,1)$. They are differently chosen in different scenarios.


\textit{Remark 3:}
	If choosing $\beta=0$, dynamic regret is in order of $\mathcal{O}(\log (T)(1+D_0(T)))$, which is the result for distributed proximal gradient algorithm over dynamic graphs \cite{Dixit2021OnlineLO}. 
	However, Theorem \ref{t1} drops the assumption of $l$-smooth on $f_t(x)$ which is required in \cite{Dixit2021OnlineLO}. The existing works for composite optimization establishing upper bounds of dynamic regret under strongly convex $f_t(x)$ usually assume smoothness of $f_t(x)$ \cite{2020Ajalloeian,9654953,Alghunaim2021DecentralizedPG}. 
	Therefore, the result without the smooth assumption here greatly expands the scope of application.
	
\textit{Remark 4:}
	Theorem \ref{t1} also holds true when considering static regret.
	If the sequence $\{u_t\}_{t=1}^T$ is chosen time-invariant, which means the sequence does not change over time, $D_\beta(T)=\sum_{t=2}^Tt^\beta\Arrowvert u_t-u_{t-1}\Arrowvert$ is equal to 0. In this case, $\bm{Reg}_T^d= \mathcal{O}(\log(T))$ is obtained, which is the optimal bound on static regret in the strongly convex case\cite{9178808}. 
	Therefore, Theorem \ref{t1} may be optimal in this sense which is of interest as one possible future work.
	
\textit{Remark 5:}
	A promising extension can be considered in the future work.
	The positive step size $\eta_t$ is related to $D_\beta(T)$ in Theorems \ref{t0} and \ref{t1}. However, in most dynamic environment, $D_\beta(T)$ may be unknown in advance. In \cite{2021pgmDynamicRegret}, authors pointed out that in some online learning cases, $D_\beta (T)$ can be estimated by employing observed data for the learning problems. By adjusting step size, the dynamic regret is expected to decrese in an appropriate rate and then $D_\beta(T)$ may be derived in reverse.
	Moreover, it can be observed that the selection of step size in Theorems \ref{t0} and \ref{t1} needs to know $T$ in advance. However, if $T$ is unknown, similar results can still be ensured by leveraging the so-called doubling trick, as done in\cite{Yu2020ALC}.

\textit{Remark 6:}
The result in Theorem 2 can be further improved when $f_t$ is $\ell$-smooth as well.
	The result in \cite{2020Ajalloeian} shows that 	
	\begin{align}
		\lVert x_{t+1}-x_{t}^*\lVert\leqslant \rho_t\lVert x_{t}-x_{t}^*\lVert,
	\end{align}
	where $\rho_t:=max\{|1-\eta_t\mu|,|1-\eta_t \ell|\}$ and $x_{t}^*$ is the optimal point of $F_t(x)=f_t(x)+r_t(x)$. Based on the convexity of $F_t$ and the bound of $\rVert\tilde{\nabla}F_t\rVert$, the upper bound for strongly convex and smooth case is in order of $\mathcal{O}(1+D_0(T))$.
	The result matches the best-known one in \cite{2020Ajalloeian}.
	
\section{Numerical Examples}\label{s4}
Numerical experiments are provided here to show the performance of OPG by comparison with some state-of-the-art algorithms, which are valid for online composite convex optimizition.

\textbf{Objective functions:}
The performance of OPG is tested by optimizing two types of loss functions, corresponding to the function in the theoretical part.
Based on the nonsmooth convex Hinge loss and the time-varying nondifferentiable regularization function $r_t(x)$ defined by (\ref{eq31}), two online regression functions are defined:
\begin{align}
	F^1_t(x)&= l_t(x) + r_t(x),\label{f1}\\
	F_t^2(x)&=l_t(x)+\frac{\lambda}{2}\lVert x\rVert^2+ r_t(x).\label{f3}
\end{align}
Here $l_t(x)$ is the Hinge loss, defined by
\begin{align*}
	l_t(x)=max(0,1-y_ta_t^\top x),
\end{align*}
where $y_t$ and $a_t$ are generated by the dataset Usenet2. Usenet2 is based on the newsgroups collection and collects which messages are considered interesting or junk in each time period, where the 99-dimensional vector is realized as feature $a_t$ and the 1-dimensional vector is realized as label $y_t$. 
The related parameter of $r_t(x)$ is $\rho=0.4$, $\tau=1$ and $\epsilon=0.1$ for $F^1_t(x)$ and $F^2_t(x)$.
The related parameter of $f_t(x)=l_t(x)+\frac{\lambda}{2}\lVert x\rVert^2$ is $\lambda=1$ for $F_t^2(x)$.

The two functions are considered as different types of loss functions. 
Since the first term $f_t(x)$ of $F^1_t(x)$ is a nonsmooth convex loss $l_t(x)$,
$F^1_t(x)$ will show the perpformance of OPG for nonsmooth and convex $f_t(x)$. Since the first term $f_t(x)$ of $F^2_t(x)$ is a nonsmooth strongly convex loss $l_t(x)+\frac{\lambda}{2}\lVert x\rVert^2$, $F^2_t(x)$ will show the perpformance of OPG for nonsmooth and strongly convex $f_t(x)$. Besides, both of the loss functions have a time-varying nonsmooth regularization function.
Based on the results in theoretical analysis in Section \ref{s3}, OPG algorithm is able to solve both of them. 

\textbf{Other online methods:}
In order to show the effectiveness of the proposed OPG algorithm, let us consider the following efficient online algorithms.
\subsubsection{SAGE (Stochastic Accelerated GradiEnt)} It is an accelerated gradient method for stochastic composite optimization\cite{Hu2009AcceleratedGM}. 
Although it is a stochastic composite optimization, they also propose the accelerated gradient method for online setting.
Each iteration of the method takes the form
\begin{align*}
	x_t&=(1-\alpha_t)y_{t-1}+\alpha_tz_{t-1},\\
	y_t&=arg\min_{x} \{\langle \tilde{\nabla} f_{t-1}(x_t), x-x_t\rangle+\frac{L_t}{2}\lVert x-x_t\rVert^2 + r_t(x)\},\\
	z_t&=z_{t-1}-\alpha_t(L_t+\mu\alpha_t)^{-1}[L_t(x_t-y_t)+\mu(z_{t-1}-x_t)],
\end{align*}
where $\{\alpha_t\}$ and $\{L_t\}$ are two parameter sequences and $\mu$ is the strong-convexity constant of $F_t(x)$.
It has established upper bounds on static regret in order of ${\cal{O}}(\sqrt{T})$ for convex $f_{t}(x)$ and ${\cal{O}}(\log(T))$ for strongly convex $f_{t}(x)$, by choosing suitable parameter sequences $\{\alpha_t\}$ and $\{L_t\}$\cite{Hu2009AcceleratedGM}.
\subsubsection{AC-SA (Accelerated Stochastic Approximation)} It is a stochastic gradient
descent algorithm for handling convex composite optimization problems\cite{Ghadimi2012OptimalSA}. 
For better comparison, we choose the true subgradient $\tilde{\nabla} f_{t}$ instead of the stochastic one.
Each iteration of the method takes the form
\begin{align*}
	x_t^{md}=&\frac{(1-\alpha_t)(\mu+\gamma_t)}{\gamma_t+(1-\alpha_t^2)\mu}x_{t-1}^{ag}+\frac{\alpha_t[(1-\alpha_t)+\gamma_t]}{\gamma_t+(1-\alpha_t^2)\mu}x_{t-1},\\
	x_t=&arg\min_{x}\{\alpha_t[\langle \tilde{\nabla} f_{t}(x_t^{md}),x\rangle+r_t(x)+\mu V(x_t^{md},x)]\\
	&+[(1-\alpha_t)\mu+\gamma_t]V(x_{t-1},x)\},\\
	x_t^{ag}=&\alpha_t x_t +(1-\alpha_t)x_{t-1}^{ag},
\end{align*}
where $\{\alpha_t\}$ and $\{\gamma_t\}$ are two parameter sequences, $\mu$ denotes the strong-convexity constant of $F_t(x)$ and $V(\cdot,\cdot)$ is the Bregman divergence, and here we choose $V(x,z)=\frac{1}{2}\lVert x-z\rVert^2$.
Optimal convergence rates for tackling different types of stochastic composite optimization problems were discussed in \cite{Ghadimi2012OptimalSA}.
\subsubsection{RDA (Regularized Dual Averaging Method)} 
It is an extension of Nesterov’s dual averaging method for composite optimization problems, where one term is the convex loss function, and the other is a regularization term\cite{Chen2012OptimalRD}.
Each iteration of the method takes the form
\begin{align*}
	x_{t+1}=arg\min_{x}\{\frac{1}{t}\sum_{\tau = 1}^{t}\langle \tilde{\nabla} f_\tau{}(x_\tau), x\rangle + r_t(x)+ \frac{\beta_t}{t}h(x)\},
\end{align*}
where $\{\beta_t\}$ is a parameter sequence and $h(x)$ is an auxiliary strongly convex function, and here we choose $h(x)=\frac{1}{2}\lVert x\rVert^2$. It ensures a bound ${\cal{O}}(\sqrt{T})$ on static regret with $\beta_t={\cal{O}}(\sqrt{T})$ for convex loss\cite{Chen2012OptimalRD}.
 

\subsection{Performance on Nonsmooth Convex $f_t(x)$}
According to $F^1_t(x)$ in (\ref{f1}), a time-varying nondifferentiable regularization function is added into a nonsmooth convex loss function. 
The step size $\eta_t$ is set to $0.001/t$ in OPG, while step sizes in other three algorithms are set optimally based on their analysis.
The simulation is run 1500 times and the average of dynamic regret $\bm{Reg}_T^d/T$ is shown in Fig. \ref{fa}. It can be seen that OPG performs the best in our simulation. 
\begin{figure}[htbp]
	\centering
	\includegraphics[width=3in]{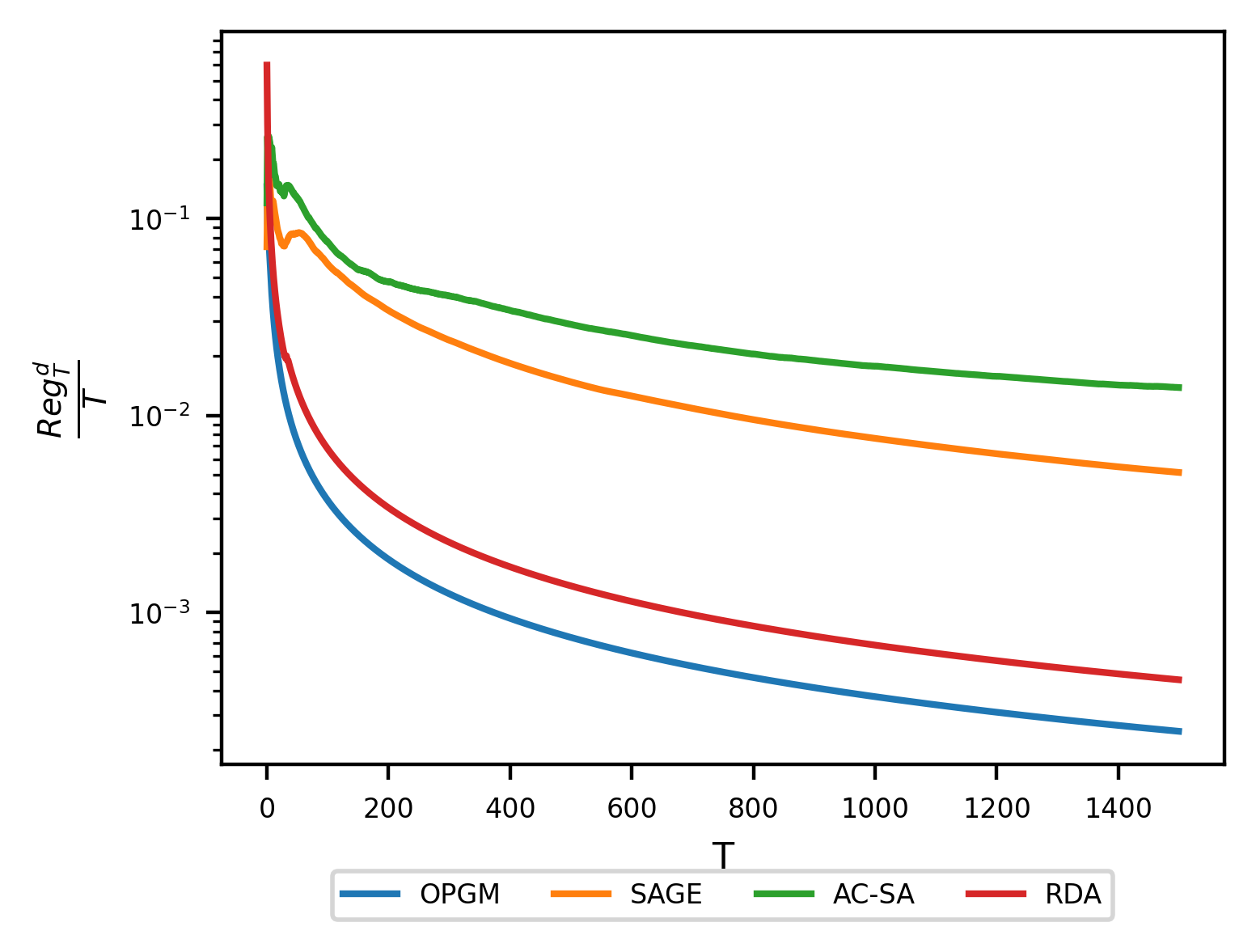}
	\caption{The performance on $F^1_t(x)$}\label{fa}
\end{figure}

\subsection{Performance on Nonsmooth Strongly Convex $f_t(x)$}
The example (\ref{f3}) will show the performance of OPG on nonsmooth strongly convex $f_t(x)$ and time-varying nondifferentiable $r_t(x)$. 
To achieve the best possible upper bound for each algorithm, different step sizes are set according to their analysis. The simulation is run 1500 times.
Fig. \ref{fb} shows how the average of dynamic regret $\bm{Reg}_T^d/T$ changes with horizon $T$. It shows that OPG performs the best in the simulation.

\begin{figure}[htbp]
	\centering
	\includegraphics[width=3in]{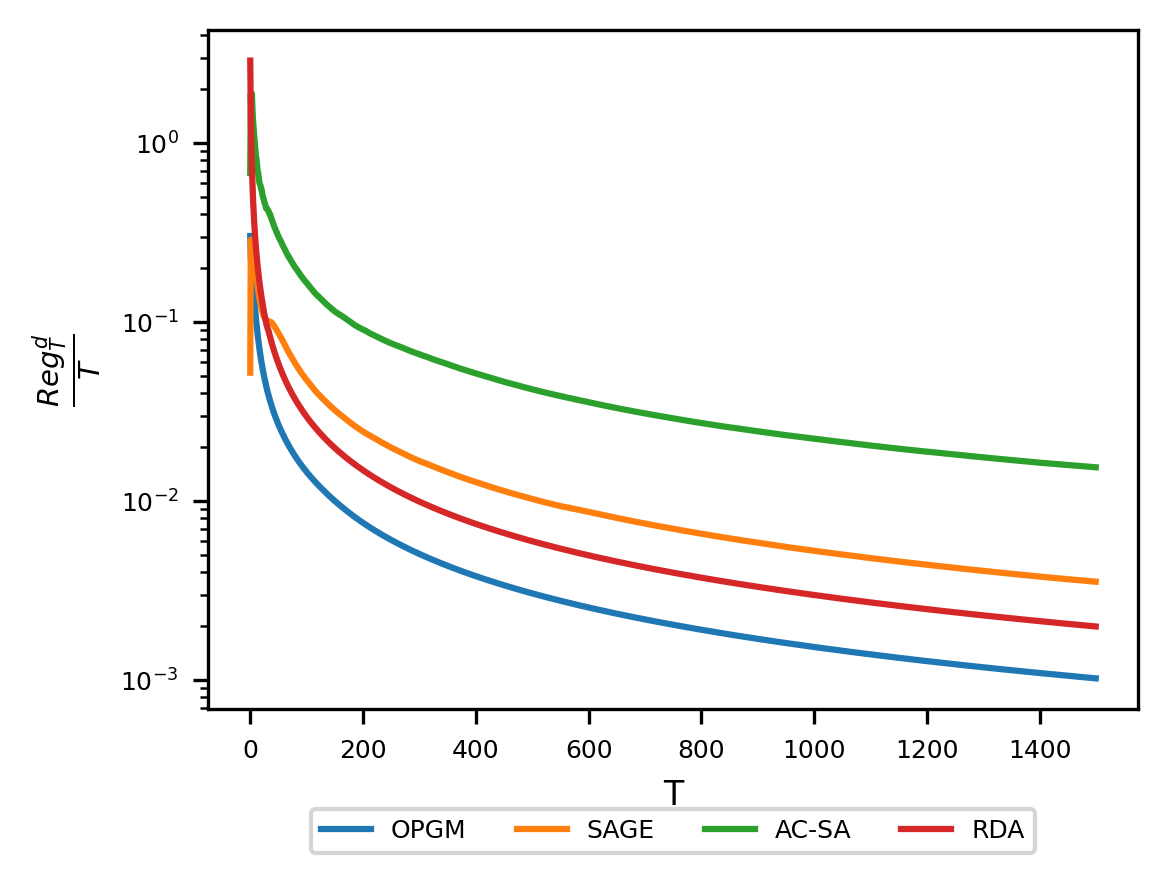}
	\caption{The performance on $F^2_t(x)$}\label{fb}
\end{figure}

Therefore, numerical results show that, compared with other three algorithms, OPG algorithm performs the best for the two cases in theoretical analysis. 

\section{Conclusion}\label{s5}
This paper considers online optimization with composite loss functions, where two terms are time-varying and nonsmooth.
The loss function is optimized by online proximal gradient (OPG) algorithm in terms of the extended path variation $D_\beta(T)$.
By analyzing upper bounds of its dynamic regret for two cases, their corresponding upper bounds are obtained.
In the first case that $f_t(x)$ is convex, we established a regret bound comparable to the best-known one in the existing literature, simultaneously extending the problem to time-varying regularizers.
In the second case that $f_t(x)$ is strongly convex, we obtained a better bound than another work while removing the smoothness assumption. 
At last, numerical experiments verified the performance of OPG algorithm in two cases. 
Possible future directions can be placed on the investigation of whether the obtained regret bounds are optimal or not, and the relaxation of stepsize selection without dependence on $D_\beta(T)$, etc.

{\appendix}

{\appendices
In this section, detailed discussions of the lemmas and theorems are given as follows.
\section{Proof of Theorem \ref{t0}}\label{ap0}
	First, the following result reveals the relationship between any sequence $\{u_t\}_{t=1}^T \subset \cal{X}$ and iteration sequence $\{x_t\}_{t=1}^T$ in OPG. If Assumption \ref{a4} is satisfied, for any non-increasing step size $0<\eta_{t+1}\leqslant\eta_{t}$ in OPG algorithm, then it follows from \cite[Lemma 4]{2021pgmDynamicRegret} that
\begin{align}
	&\sum_{t=1}^T\frac{1}{\eta_t}(\lVert u_t-x_{t}\rVert^2-\lVert u_t-x_{t+1}\rVert^2)\nonumber\\
	\leqslant&2R\sum_{t=1}^{T-1}\frac{1}{\eta_{t}}\lVert u_{t+1}-u_{t}\rVert+\frac{R^2}{\eta_{T}}, \label{e10}
\end{align}
where $R$ is defined in Assumption \ref{a4}.

When setting $\mu=0$, Lemma \ref{l1} amounts to the case of convex $f_t(x)$. Then, (\ref{eq5}) reduces to
\begin{align}\label{l1.1}
	F_{t}(x_{t})-F_{t}(u_{t}) &\leqslant\frac{1}{2\eta_t}\rVert u_{t}-x_t\rVert^2-\frac{1}{2\eta_t}\rVert u_{t}-x_{t+1}\rVert^2\notag\\
	&+\frac{\eta_t}{2}
	(\rVert \tilde{\nabla} f_{t }(x_t)+\tilde{\nabla} r_{t }(x_t)\rVert^2).	
\end{align}

According to (\ref{e10}), the sum of $\frac{1}{2\eta_t}\rVert u_{t}-x_t\rVert^2
-\frac{1}{2\eta_t}\rVert u_{t}-x_{t+1}\rVert^2$ could be bounded by $R\sum_{t=1}^{T-1}\frac{1}{\eta_{t}}\lVert u_{t+1}-u_{t}\rVert+\frac{R^2}{2\eta_{T}}$. 
Then Assumption \ref{a5} shows that the upper bound of $\rVert \tilde{\nabla} f_{t}(x_t)+\tilde{\nabla} r_{t}(x_t)\rVert^2$ is $M^2$.
Hence, (\ref{l1.1}) implies
\begin{align}
	&\sum_{t=1}^T(F_t(x_t)-F_t(u_t)) \nonumber\\
	\leqslant& R\sum_{t=1}^{T-1}\frac{1}{\eta_{t}}\lVert u_{t+1}-u_{t}\rVert+\frac{R^2}{2\eta_{T}}+\frac{M^2}{2}\sum_{t=1}^{T}\eta_{t}.\label{eq32}
\end{align}

If one multiplies and divides $t^\beta$ by $\lVert u_{t+1}-u_{t}\rVert$ and bounds $\frac{1}{\eta_tt^\beta}$ by choosing $\mathop{\max}_{t\in[T]}\big\{\frac{1}{\eta_tt^\beta}\big\}$, then according to the definition of $D_\beta(T)$, (\ref{eq32}) yields
\begin{align}
	&\sum_{t=1}^T(F_t(x_t)-F_t(u_t)) \nonumber\\
	\leqslant& R\mathop{\max}_{t\in[T]}\big\{\frac{1}{\eta_tt^\beta}\big\}\sum_{t=1}^{T-1}t^\beta\lVert u_{t+1}-u_{t}\rVert
	+\frac{R^2}{2\eta_{T}}\nonumber\\
	&+\frac{M^2}{2}\sum_{t=1}^{T}\eta_{t}\nonumber\\
	=& R\mathop{\max}_{t\in[T]}\big\{\frac{1}{\eta_tt^\beta}\big\}D_\beta(T)
	+\frac{R^2}{2\eta_{T}}+\frac{M^2}{2}\sum_{t=1}^{T}\eta_{t}.\label{e11}
\end{align}

By setting $\eta_{t}=t^{-\gamma}\sigma$ with $\sigma$ being a constant and $\gamma\in[\beta,1)$, $\mathop{\max}_{t\in[T]}\big\{\frac{1}{\eta_tt^\beta}\big\}=\frac{T^{\gamma-\beta}}{\sigma}$, invoking
(\ref{e11}) yeilds that 
\begin{align}
	&\sum_{t=1}^T(F_t(x_t)-F_t(u_t)) \nonumber\\
	\leqslant&\frac{RT^{\gamma-\beta}}{\sigma}D_\beta(T)+\frac{R^2T^\gamma}{2\sigma}+\frac{M^2\sigma}{2}\sum_{t=1}^{T}t^{-\gamma}\nonumber\\
	\leqslant&\frac{RT^{\gamma-\beta}}{\sigma}D_\beta(T)+\frac{R^2T^\gamma}{2\sigma}+\frac{M^2\sigma T^{1-\gamma}}{2(1-\gamma)}.
\end{align}
According to Cauchy inequality, if one chooses the optimal $\sigma$ with
$$
\sigma=\frac{\sqrt{(1-\gamma)2RT^{2\gamma-\beta-1}D_\beta(T)+R^2T^{2\gamma-1}}}{M},
$$
it can be then obtained that
\begin{align}
	&\sum_{t=1}^T(F_t(x_t)-F_t(u_t)) \nonumber\\
	\leqslant&\sqrt{\frac{M^2 (2RT^{1-\beta}D_\beta(T)+TR^2)}{1-\gamma}}. \label{eq.}
\end{align}
Thus, there holds
\begin{align*}
	\bm{Reg}_T^d=\mathcal{O}(\sqrt{T^{1-\beta}D_\beta(T)+T}).
\end{align*}
\hfill\rule{2mm}{2mm}

\section{Proof of Lemma \ref{l1}}\label{ap1}
Let us start with defining a Bregman divergence $B_\psi(x,u)$ associated with a differentiable strongly convex $\psi(x)$. It will be useful to introduce some results of Bregman divergence\cite{2021pgmDynamicRegret}:
\begin{itemize}
	\item[$a$.] $B_\psi(x,u):=\psi(x)-\psi(u)-\langle \nabla\psi(u), x-u\rangle$ (its definition);
	\item[$b$.] $\nabla_xB_\psi(x,u)=\nabla\psi(x)-\nabla\psi(u)$;
	\item[$c$.] $B_\psi(x,u)=B_\psi(x,z)+B_\psi(z,u)+\langle x-z, \nabla\psi(z)-\nabla\psi(u)\rangle$.
\end{itemize}

Let $\psi(x)=\frac{1}{2}\|x\|^2$ in the following. By Algorithm 1, setting any sequence $\{u_t\}_{t=1}^T \subset \cal{X}$, one has
\begin{align*}
	&F_t(x_t)-F_t(u_t)\nonumber\\
	=&f_t(x_t)+r_t(x_t)-f_t(u_t)-r_t(u_t).
\end{align*}

Invoking the optimality criterion for (\ref{x_t+1}) implies that for any $x\in \mathcal{X}$,
\begin{align}
	0\leqslant \langle x-x_{t+1}, \tilde{\nabla} f_{t}(x_{t})+\tilde{\nabla} r_{t}(x_{t+1})+\frac{1}{\eta_t}(x_{t+1}-x_{t})\rangle.
\end{align}
From this result, choosing $x=u_{t}$, it follows that
\begin{align}
	&\langle \tilde{\nabla} f_{t}(x_{t})+\tilde{\nabla} r_{t}(x_{t+1}), x_{t+1}-u_{t}\rangle \nonumber\\
	\leqslant &\frac{1}{\eta_t}\langle x_{t+1}-x_{t}, u_{t}-x_{t+1}\rangle. \label{eq2}
\end{align}
According to (\ref{eq2}), $f_t$'s strong convexity and $r_t$'s convexity imply that
\begin{align}
	&f_{t}(x_{t})+r_{t}(x_{t})-f_{t}(u_{t})-r_{t}(u_{t})\nonumber\\
	=&f_{t}(x_{t})+r_{t}(x_{t+1})-f_{t}(u_{t})-r_{t}(u_{t})\nonumber\\
	&+r_{t}(x_{t})-r_{t}(x_{t+1})\nonumber\\
	\leqslant& \langle \tilde{\nabla} f_{t}(x_{t}), x_{t}-u_{t}\rangle-\frac{\mu}{2}\lVert u_{t}-x_{t}\rVert^2\nonumber\\
	&+\langle \tilde{\nabla} r_{t}(x_{t+1}), x_{t+1}-u_{t}\rangle+\langle \tilde{\nabla} r_{t}(x_{t}), x_{t}-x_{t+1}\rangle\nonumber\\
	=&\langle \tilde{\nabla} f_{t}(x_{t}), x_{t+1}-u_{t}\rangle+\langle \tilde{\nabla} f_{t}(x_{t}), x_{t}-x_{t+1}\rangle\nonumber\\
	&-\frac{\mu}{2}\lVert u_{t}-x_{t}\rVert^2+\langle \tilde{\nabla} r_{t}(x_{t+1}), x_{t+1}-u_{t}\rangle\nonumber\\
	&+\langle \tilde{\nabla} r_{t}(x_{t}), x_{t}-x_{t+1}\rangle\nonumber	\\
	=&\langle \tilde{\nabla} f_{t}(x_{t})+\tilde{\nabla} r_{t}(x_{t+1}), x_{t+1}-u_{t}\rangle-\frac{\mu}{2}\lVert u_{t}-x_{t}\rVert^2\nonumber\\
	&+\langle \tilde{\nabla} f_{t}(x_{t})+\tilde{\nabla} r_{t}(x_{t}), x_{t}-x_{t+1}\rangle. \label{eq1}	
\end{align}

Combining (\ref{eq2}) with (\ref{eq1}) yields
\begin{align}
	&f_{t}(x_{t})+r_{t}(x_{t})-f_{t}(u_{t})-r_{t}(u_{t})\nonumber\\
	\leqslant&\frac{1}{\eta_t}\langle x_{t+1 }-x_{t }, u_{t }-x_{t+1 }\rangle -\frac{\mu}{2}\lVert u_{t }-x_{t }\rVert^2\nonumber\\
	&+\langle \tilde{\nabla} f_t(x_{t })+\tilde{\nabla} r_{t }(x_{t }), x_{t }-x_{t+1 }\rangle\nonumber\\
	=&\frac{1}{\eta_t}\langle\nabla\psi(x_{t+1 })-\nabla\psi(x_{t }), u_{t }-x_{t+1 } \rangle-\frac{\mu}{2}\lVert u_{t }-x_{t }\rVert^2 \nonumber\\
	&+\langle \tilde{\nabla} f_{t }(x_{t })+\tilde{\nabla} r_{t }(x_{t }), x_{t }-x_{t+1 }\rangle\nonumber\\
	=&\frac{1}{\eta_t}( B_\psi(u_{t },x_{t })-B_\psi(u_{t },x_{t+1 })- B_\psi(x_{t+1 },x_{t }))\nonumber\\
	&-\frac{\mu}{2}\lVert u_{t }-x_{t }\rVert^2+\langle \tilde{\nabla} f_{t }(x_{t })+\tilde{\nabla} r_{t }(x_{t }), x_{t }-x_{t+1 }\rangle. \label{el15}
\end{align}
According to the definition of $\psi(x)$, the term $\langle x_{t+1  }-x_{t  }, u_{t  }-x_{t+1  }\rangle$ is equal to $\langle\nabla\psi(x_{t+1  })-\nabla\psi(x_{t  }), u_{t  }-x_{t+1  } \rangle$. The last equation in (\ref{el15}) is established due to the property $c$ of Bregman divergence. Thus, by (\ref{el15}), invoking the definition of Bregman divergence leads to
\begin{align}
	&f_{t  }(x_{t  })+r_{t  }(x_{t  })-f_{t  }(u_{t  })-r_{t  }(u_{t  })\nonumber\\
	\leqslant&\frac{1}{2\eta_t}(\lVert u_{t}-x_{t}\rVert^2- \lVert u_{t}-x_{t+1}\rVert^2- \lVert x_{t+1}-x_{t}\rVert^2) \nonumber\\
	&-\frac{\mu}{2}\lVert u_{t}-x_{t}\rVert^2+\langle \tilde{\nabla} f_{t}(x_{t})+\tilde{\nabla} r_t(x_{t}), x_{t}-x_{t+1}\rangle.\label{eq4}
\end{align}
By Young's inequality, one can obtain that
\begin{align}
	&\langle \tilde{\nabla} f_{t}(x_{t})+\tilde{\nabla} r_{t}(x_{t}), x_{t}-x_{t+1}\rangle-\frac{1}{2\eta_t}\lVert x_{t}-x_{t+1}\rVert^2\nonumber\\
	\leqslant& \frac{\eta_t}{2}\lVert \tilde{\nabla} f_{t}(x_{t})+\tilde{\nabla} r_{t}(x_{t})\rVert^2. \label{eq3}
\end{align}
Thus, invoking (\ref{eq3}), together with (\ref{eq4}), yields that
\begin{align*}
&f_{t  }(x_{t  })+r_{t  }(x_{t  })-f_{t  }(u_{t  })-r_{t  }(u_{t  })\nonumber\\
	\leqslant&\frac{1}{2\eta_t}(\lVert u_{t}-x_{t}\rVert^2-\lVert u_{t}-x_{t+1}\rVert^2) \nonumber\\
	&-\frac{\mu}{2}\lVert u_{t}-x_{t}\rVert^2+\frac{\eta_t}{2}\lVert \tilde{\nabla} f_{t}(x_{t})+\tilde{\nabla} r_{t}(x_{t})\rVert^2.
\end{align*}
\hfill\rule{2mm}{2mm}
\section{Proof of Theorem \ref{t1}}\label{ap2}
According to Assumption \ref{a5},
it implies that $\rVert \tilde{\nabla} f_t(x)+\tilde{\nabla} r_t(x)\rVert^2\leq M^2$.
Applying this substitution into Lemma \ref{l1} yields
\begin{align}
	&\sum_{t=1}^T(F_t(x_t)-F_t(u_t))\nonumber\\
	=&\sum_{t=1}^T(f_t(x_t)+r_t(x_t)-f_t(u_t)-r_t(u_t))\nonumber\\
	\leqslant&\frac{1}{2}\sum_{t=1}^T((\frac{1}{\eta_t}-\mu)\lVert u_t-x_{t}\rVert^2-\frac{1}{\eta_t}\lVert u_t-x_{t+1}\rVert^2)\nonumber\\
	&+\frac{M^2}{2}\sum_{t=1}^T\eta_t.\label{eq7}
\end{align}

Now let us bound the first term of (\ref{eq7}) as follows
\begin{align}
	&\sum_{t=1}^T((\frac{1}{\eta_t}-\mu)\lVert u_t-x_{t}\rVert^2-\frac{1}{\eta_t}\lVert u_t-x_{t+1}\rVert^2)\nonumber\\
	=&\sum_{t=0}^{T-1}(\frac{1}{\eta_{t+1}}-\mu)\lVert u_{t+1}-x_{t+1}\rVert^2-\sum_{t=1}^T\frac{1}{\eta_t}\lVert u_t-x_{t+1}\rVert^2\nonumber\\
	=&\sum_{t=0}^{T-1}(\frac{1}{\eta_{t+1}}-\frac{1}{\eta_{t}}-\mu)\lVert u_{t+1}-x_{t+1}\rVert^2-\sum_{t=1}^T\frac{1}{\eta_t}\lVert u_t-x_{t+1}\rVert^2\nonumber\\
	&+\sum_{t=0}^{T-1}\frac{1}{\eta_{t}}\lVert u_{t+1}-x_{t+1}\rVert^2\nonumber\\
	=&\sum_{t=1}^{T-1}(\frac{1}{\eta_{t+1}}-\frac{1}{\eta_{t}}-\mu)\lVert u_{t+1}-x_{t+1}\rVert^2-\sum_{t=1}^T\frac{1}{\eta_t}\lVert u_t-x_{t+1}\rVert^2\nonumber\\
	&+\sum_{t=1}^{T-1}\frac{1}{\eta_{t}}\lVert u_{t+1}-x_{t+1}\rVert^2+(\frac{1}{\eta_1}-\mu)\lVert u_1-x_{1}\rVert^2\nonumber\\
	=&\sum_{t=1}^{T-1}(\frac{1}{\eta_{t+1}}-\frac{1}{\eta_{t}}-\mu)\lVert u_{t+1}-x_{t+1}\rVert^2+(\frac{1}{\eta_1}-\mu)\lVert u_1-x_{1}\rVert^2\nonumber\\
	&+\sum_{t=1}^{T-1}\frac{1}{\eta_{t}}(\lVert u_{t+1}-x_{t+1}\rVert^2-\lVert u_t-x_{t+1}\rVert^2). \label{eq6}
\end{align}
According to
\begin{align}
	\lVert u_t-x_{t+1}\rVert^2=&
	\lVert u_{t+1}-x_{t+1}\rVert^2-2\langle u_{t+1}-x_{t+1}, u_{t+1}-u_{t}\rangle\nonumber\\
	 &+\lVert u_{t+1}-u_{t}\rVert^2, \label{eq35}
\end{align}
and using (\ref{eq35}), one has
\begin{align}
	&\sum_{t=1}^T((\frac{1}{\eta_t}-\mu)\lVert u_t-x_{t}\rVert^2-\frac{1}{\eta_t}\lVert u_t-x_{t+1}\rVert^2)\nonumber\\
	\leqslant&\sum_{t=1}^{T-1}(\frac{1}{\eta_{t+1}}-\frac{1}{\eta_{t}}-\mu)\lVert u_{t+1}-x_{t+1}\rVert^2+(\frac{1}{\eta_1}-\mu)\lVert u_1-x_{1}\rVert^2\nonumber\\
	&+\sum_{t=1}^{T-1}\frac{2}{\eta_{t}}\langle u_{t+1}-x_{t+1}, u_{t+1}-u_{t}\rangle-\sum_{t=1}^{T}\frac{1}{\eta_{t}}\lVert u_{t+1}-u_{t}\rVert^2\nonumber\\
	\leqslant&\sum_{t=1}^{T-1}(\frac{1}{\eta_{t+1}}-\frac{1}{\eta_{t}}-\mu)\lVert u_{t+1}-x_{t+1}\rVert^2+(\frac{1}{\eta_1}-\mu)\lVert u_1-x_{1}\rVert^2\nonumber\\
	&+\sum_{t=1}^{T-1}\frac{2}{\eta_{t}}\lVert u_{t+1}-x_{t+1}\lVert \lVert u_{t+1}-u_{t}\lVert-\sum_{t=1}^{T}\frac{1}{\eta_{t}}\lVert u_{t+1}-u_{t}\rVert^2\nonumber\\
	\leqslant&\sum_{t=1}^{T-1}(\frac{1}{\eta_{t+1}}-\frac{1}{\eta_{t}}-\mu)\lVert u_{t+1}-x_{t+1}\rVert^2+(\frac{1}{\eta_1}-\mu)\lVert u_1-x_{1}\rVert^2\nonumber\\
	&+\sum_{t=1}^{T-1}\frac{2R}{\eta_{t}}\lVert u_{t+1}-u_{t}\lVert.\label{le6}
\end{align}
Considering this upper bound, the dynamic regret is bounded above by 
\begin{align}
	&\sum_{t=1}^T(f_t(x_t)+r_t(x_t)-f_t(u_t)-r_t(u_t))\nonumber\\
	\leqslant&\frac{M^2}{2}\sum_{t=1}^T\eta_t+\frac{1}{2}\sum_{t=1}^{T-1}(\frac{1}{\eta_{t+1}}-\frac{1}{\eta_{t}}-\mu)\lVert u_{t+1}-x_{t+1}\rVert^2\nonumber\\
	&+\frac{1}{2}(\frac{1}{\eta_1}-\mu)\lVert u_1-x_{1}\rVert^2+\sum_{t=1}^{T-1}\frac{R}{\eta_{t}}\lVert u_{t+1}-u_{t}\rVert\nonumber\\
	\leqslant&\frac{M^2}{2}\sum_{t=1}^T\eta_t+\frac{1}{2}\sum_{t=1}^{T-1}(\frac{1}{\eta_{t+1}}-\frac{1}{\eta_{t}}-\mu)\lVert u_{t+1}-x_{t+1}\rVert^2\nonumber\\
	&+\frac{1}{2}(\frac{1}{\eta_1}-\mu)\lVert u_1-x_{1}\rVert^2\nonumber\\
	&+R\mathop{\max}_{t\in[T]}\big\{\frac{1}{\eta_tt^\beta}\big\}\sum_{t=1}^{T-1}t^\beta\lVert u_{t+1}-u_{t}\rVert.\label{el16}
\end{align}

By introducing a small positive number $\delta<\mu$, (\ref{el16}) yeilds
\begin{align*}
		&\sum_{t=1}^T(f_t(x_t)+r_t(x_t)-f_t(u_t)-r_t(u_t))\nonumber\\
		\leqslant&\frac{M^2}{2}\sum_{t=1}^T\eta_t+\frac{1}{2}\sum_{t=1}^{T-1}(\frac{1}{\eta_{t+1}}-\frac{1}{\eta_{t}}-\delta)\lVert u_{t+1}-x_{t+1}\rVert^2\nonumber\\
		&+\frac{1}{2}(\frac{1}{\eta_1}-\delta)\lVert u_1-x_{1}\rVert^2-\frac{1}{2}(\mu-\delta)\lVert u_1-x_{1}\rVert^2\\
		&+R\mathop{\max}_{t\in[T]}\big\{\frac{1}{\eta_tt^\beta}\big\}\sum_{t=1}^{T-1}t^\beta\lVert u_{t+1}-u_{t}\rVert.
\end{align*}

Setting $\eta_t=\frac{\gamma}{t}$, where $\gamma<\frac{1}{\delta}$, one can derive $\frac{1}{\eta_{t+1}}-\frac{1}{\eta_{t}}-\delta$ and $\frac{1}{\eta_{1}}-\delta$ are both greater than 0. Thus, there holds
\begin{align}
	&\sum_{t=1}^T(f_t(x_t)+r_t(x_t)-f_t(u_t)-r_t(u_t))\nonumber\\
	\leqslant&\frac{M^2}{2}\sum_{t=1}^T\eta_t+\frac{R^2}{2}\sum_{t=1}^{T-1}(\frac{1}{\eta_{t+1}}-\frac{1}{\eta_{t}}-\delta)+\frac{R^2}{2}(\frac{1}{\eta_1}-\delta)\nonumber\\
	&-\frac{1}{2}(\mu-\delta)\lVert u_1-x_{1}\rVert^2+R\mathop{\max}_{t\in[T]}\big\{\frac{1}{\eta_tt^\beta}\big\}D_\beta(T)\nonumber\\
	\leqslant&\frac{M^2}{2}\sum_{t=1}^T\eta_t+R\mathop{\max}_{t\in[T]}\big\{\frac{1}{\eta_tt^\beta}\big\}D_\beta(T)-\frac{\delta R^2 T}{2}+\frac{R^2}{2\eta_T}\nonumber\\
	&-\frac{1}{2}(\mu-\delta)\lVert u_1-x_{1}\rVert^2,\label{le8}
\end{align}
where one uses Assumption \ref{a4} to bound $\lVert u_{t+1}-x_{t+1}\rVert^2$, $\lVert u_1-x_{1}\rVert^2$ and uses
the definition of $D_\beta(T)$. Then the result (\ref{le8}) is obtained by telescoping subtraction of $\sum_{t=1}^{T-1}(\frac{1}{\eta_{t+1}}-\frac{1}{\eta_{t}})$.

As $\eta_t=\frac{\gamma}{t}$ and $0\leqslant\beta<1$, $\mathop{\max}_{t\in[T]}\big\{\frac{1}{\eta_tt^\beta}\big\}$ can be bounded as
\begin{align}
	\mathop{\max}_{t\in[T]}\big\{\frac{1}{\eta_tt^\beta}\big\}=\mathop{\max}_{t\in[T]}\big\{\frac{t^{1-\beta}}{\gamma}\big\}=\frac{T^{1-\beta}}{\gamma}.\label{eq8}
\end{align}
According to the fact that the area of the integral sum from $1$ to $T$ is larger than the area of the discrete sum from $2$ to $T$, one can bound $\sum_{t=1}^T\eta_t$ as follows
\begin{align}
	\sum_{t=1}^T\eta_t&=\gamma\sum_{t=1}^T\frac{1}{t}=\gamma(1+\sum_{t=2}^T\frac{1}{t})\nonumber\\
	&\leqslant \gamma(1+\int_1^T\frac{1}{t}dt)=\gamma(1+\log T).\label{eq9}
\end{align}

Now, adding (\ref{eq8}) and (\ref{eq9}) into (\ref{le8}), it is obtained
\begin{align}
	&\sum_{t=1}^T(f_t(x_t)+r_t(x_t)-f_t(u_t)-r_t(u_t))\nonumber\\
	\leqslant&\frac{M^2\gamma}{2}(1+\log T)+\frac{ R T^{1-\beta}}{\gamma}D_\beta(T)-\frac{\delta R^2 T}{2}+\frac{ TR^2}{2\gamma}\nonumber\\
	&-\frac{1}{2}(\mu-\delta)\lVert u_1-x_{1}\rVert^2.\label{le9}
\end{align}

By choosing
\begin{align*}
	\gamma=\frac{2RT^{-\beta} D_\beta(T)+R^2}{\delta R^2+(\mu-\delta)\frac{1}{T}\lVert u_1-x_{1}\rVert^2},
\end{align*}
it is easy to verify that the term $\frac{ R T^{1-\beta}}{\gamma}D_\beta(T)-\frac{\delta R^2 T}{2}+\frac{ TR^2}{2\gamma}-\frac{1}{2}(\mu-\delta)\lVert u_1-x_{1}\rVert^2$ can be zero. 
According to the prior setting $\delta\gamma<1$ and the chosen $\gamma$ here, by choosing a small $\delta$, $\frac{2RT^{-\beta} D_\beta(T)+R^2}{R^2+(\frac{\mu}{\delta}-1)\frac{1}{T}\lVert u_1-x_{1}\rVert^2}=\delta\gamma$ can be smaller than 1. 
Invoking the $\gamma$, together with (\ref{le9}), yeilds that
\begin{align}
	&\sum_{t=1}^T(f_t(x_t)+r_t(x_t)-f_t(u_t)-r_t(u_t))\nonumber\\
	\leqslant&\frac{M^2}{2\delta R}(1+\log T)(2T^{-\beta} D_\beta(T)+R).
\end{align}

Therefore, one has
\begin{align}
	\bm{Reg}_T^d=\mathcal{O}(\log T(1+T^{-\beta}D_\beta(T))).
\end{align}
%
\hfill\rule{2mm}{2mm}

\bibliographystyle{IEEEtran}
\bibliography{ref}

\vfill

\end{document}